\newtheorem*{acknowledgement}{Acknowledgement}
\newtheorem{lemma}{Lemma}
\newtheorem{theorem}{Theorem}
\numberwithin{equation}{section}
\begin{document}
\title[Reduction of gradient Ricci soliton equations]{Reduction of gradient Ricci soliton equations}
 \author{Benedito Leandro}
\address{Universidade Federal de Jata\'i, Center of Exact Sciences, BR 364, km 195, 3800, 75801-615, Jata\'i-GO, Brazil.}
 \email{bleandroneto@gmail.com}

 \thanks{The second author was supported by FAPDF 0193.001346/2016}
 \author{Jo\~ao Paulo dos Santos}
\address{ Universidade de Bras\'ilia, Department of Mathematics, 70910-900, Bras\'ilia-DF, Brazil.}
 \email{j.p.santos@mat.unb.br}

\keywords{gradient Ricci solitons, exact solutions, reduction, conformal metrics} \subjclass[2010]{53C21, 53C50, 53C44}
\date{\today}

\begin{abstract}

We consider gradient Ricci solitons conformal to a $n$-dimensional pseudo-Euclidean space and we completely describe the most general ansatz that reduces the resulting system of partial differential equations to a system of ordinary differential equations. As a consequence, the gradient Ricci solitons that arise from the reduced system are invariant under the action of either an $(n-1)$-dimensional translation group or the pseudo-orthogonal group acting on the corresponding $n$-dimensional pseudo-Euclidean space. 
\end{abstract}

\maketitle
\section{Introduction and Main Results}

A pseudo-Riemannian manifold $(M^n,g)$ endowed with a smooth vector field $X$ is a Ricci soliton if
\begin{equation}
Ric_g + \dfrac{1}{2} \mathcal{L}_X g  = \lambda g, \label{ricci-soliton}
\end{equation}
where $Ric_g$ is the Ricci tensor, $\mathcal{L}_{X}g$ is the Lie derivative in the direction of $X$ and $\lambda$ is a real constant.  The vector field $X$ is called potential vector field. The Ricci soliton is called shrinking when $\lambda>0$, steady when $\lambda=0$, and expanding when $\lambda<0$.
If $X$ is the gradient of a smooth function $f$, the Ricci soliton equation takes the form
\begin{equation}
 Ric_g + Hess_f = \lambda g, \label{gradient-ricci-soliton}
\end{equation}
where $Hess_f$ is the Hessian of $f$. In this case,  $(M^n,g)$ is called gradient Ricci soliton and the function $f$ is called potential function.

Ricci solitons are natural generalizations of Einstein metrics. They arises as self-similar solutions and they play an important role as singularity models for the Hamilton's Ricci flow. The theory of Ricci solitons has been systematically studied over the years and a good survey on the subject can be found in  \cite{cao2009,cao11,cao22}. In this work, we focus our attention on gradient Ricci solitons conformal to a $n$-dimensional pseudo Euclidean espace. 





For two-dimensional Ricci solitons, Hamilton \cite{hamilton-1} proved that any closed Ricci soliton must have constant curvature. Hamilton also obtained the first complete steady gradient Ricci soliton, known as the \emph{cigar soliton}. This soliton is also known as the \emph{Witten's black hole} \cite{witten}.  A complete classification of two dimensional gradient Ricci solitons was given by Bernstein and Mettler \cite{bernstein}. A similar result is also provided by Ramos \cite{ramos}. 

When the dimension of the soliton is $n \geq 3$, Bryant \cite{bryant} showed that there exists a complete, rotationally symmetric steady gradient Ricci soliton on $\mathbb{R}^n$, which is unique up to homothety (see also Chapter 1, section 4 in \cite{chow}). This soliton is known as the \emph{Bryant soliton}.

The Bryant soliton is a key to the classification of locally conformally flat steady gradient ricci solitons. In fact, Cao and Chen \cite{cao} proved that a complete noncompact locally conformally flat gradient steady Ricci soliton is either flat or isometric to the Bryant soliton. For the general case, 
Fern\'{a}ndez-L\'{o}pez and Garc\'ia-R\'io \cite{FernandoGarcia}
proved that a complete locally conformally flat gradient shrinking or steady Ricci soliton is rotationally symmetric. For the expanding case, they also proved the rotational symmetry when the curvature operator is nonnegative. Consequently, by using the previous results of Kotschwar \cite{kotschwar} and Cao and Chen's \cite{cao}, Fern\'andez-L\'opez and Garc\'ia-R\'io proved that any locally conformally flat, complete, simply connected, gradient Ricci soliton is isometric to $\mathbb{R}\times\mathbb{S}^{n-1}$, $\mathbb{R}^{n}$ or $\mathbb{S}^{n}$, if it is shrinking, or isometric to $\mathbb{R}^n$ or the Bryant soliton, if it is steady.  


Pseudo-Riemannian Ricci solitons have been recently investigated, as we can see in \cite{lor1}, \cite{lor2}, \cite{lor3} and \cite{lor4}, specially in the Lorentzian case. In \cite{lor3}, a local characterization of conformally flat Lorentzian gradient Ricci solitons is given. Explicit pseudo-Riemannian conformally flat gradient steady Ricci solitons was obtained by Barbosa, Pina and Tenenblat in \cite{keti}. The authors firstly reduced the soliton equation to an ODE system by considering a substitution by a function invariant under translations in a pseudo-Euclidean space. In the steady case, they provided all solutions of the reduced system. Consequently, a family of pseudo-Riemannian gradient steady Ricci soliton invariant by translations is given. 


In general, an \emph{ansatz} is a substitution that transforms the original PDE into an ODE or a PDE with less independent variables. The Lie point symmetry groups for PDE provides \emph{ansatze} by considering invariant functions under the action of a symmetry group using the algebra of infinitesimal generators. Another method for finding \emph{ansatz} is known as direct method of reduction and was introduced in a systematic way by Clarkson and Kruskal \cite{clarkson}, where the Boussinesq equation was considered.  Although the direct method has been introduced for two-dimensional equations, it was also successful when multiple variables were considered, as we can see, for example, for the nonlinear multi-dimensional wave equations \cite{irina2}. The method consists in considering new variables given by unknown \emph{ansatze} for a given PDE and then it is required that the equivalent system of equations to be a system of ODE or a system of PDE with less independent variables. With this requirement, one obtains a set of partial differential equations for the \emph{ansatze}. Once the \emph{ansatze} are obtained, one has the reduced system. 


In this paper, we consider gradient Ricci solitons conformal to a $n$-dimensional pseudo-Euclidean space $(\mathbb{R}^n,g)$. By using the direct method of reduction, we generalize the results obtained by Barbosa, Pina and Tenenblat \cite{keti} and we obtain the most general \emph{ansatz} that reduces the correspondent system of PDE to an ODE system (Theorem \ref{thm-most-general}). It turns out that the function which plays the role of \emph{ansatz} must be an invariant function under the action of the semi-orthogonal group or an invariant function under the group of translations. The reduced system of ODE's is given in Theorem \ref{thm-reduced}. A particular case of the reduced system is given in Theorem \ref{thm-special}, where a constraint is considered in order to reduce the order of the system.

In order to state our results, let $(\mathbb{R}^{n}, g)$ be the standard pseudo-Euclidean space with coordinates $(x_{1}, \cdots, x_{n})$ and metric components $g_{ij} = \delta_{ij}\varepsilon_{i}$, $1\leq i, j\leq n$, where 
$\varepsilon_{i} = \pm1$, with at least one $\varepsilon_{i} = 1$. We want to find smooth functions $\varphi$ and $f$ defined on an open subset $\Omega \subset \mathbb{R}^n$ such that, for $\bar{g}$ given by $$\bar{g}=\frac{g}{\varphi^2}$$ 
$(\Omega, \bar{g})$ is a gradient Ricci soliton with potential function $f$, i.e.,
\begin{eqnarray}
Ric_{\bar{g}}+Hess_{\bar{g}}(f)=\lambda\bar{g}, \label{solitonequation-bar}
\end{eqnarray}
where $Ric_{\bar{g}}$ and $Hess_{\bar{g}}(f)$ are, respectively, the Ricci tensor and the Hessian of the metric $\bar{g}$. In what follows, we denote that the directional derivatives of $\varphi$ and $f$ by
\begin{eqnarray*}
\frac{\partial\varphi}{\partial x_{i}}=\varphi_{,i}\quad\mbox{and}\quad\frac{\partial f}{\partial x_{i}}=f_{,i}.
\end{eqnarray*}
In this scenario, Theorem 1.3 in Barbosa, Pina and Tenenblat \cite{keti} supplies  the correspondent PDE system for the equation \eqref{solitonequation-bar}, i.e., $(\Omega, \bar{g})$ is a gradient Ricci soliton with potential function $f$ if, and only if, 

\begin{equation}\label{pde-soliton-ij}
(n-2)\varphi_{,ij}+\varphi{f}_{,ij}+\varphi_{,i}f_{,j}+\varphi_{,j}f_{,i}=0,\quad i\neq j
\end{equation}
and for each $i$
\begin{eqnarray}\label{pde-soliton-ii}
&&\varphi[(n-2)\varphi_{,ii}+\varphi{f}_{,ii}+2\varphi_{,i}f_{,i}]\nonumber\\
&&+\varepsilon_{i}\displaystyle\sum_{k}\varepsilon_{k}\left[\varphi\varphi_{,kk}-(n-1)\varphi^{2}_{,k}-\varphi\varphi_{,k}f_{,k}\right]=\varepsilon_{i}\lambda.
\end{eqnarray}
It is important to note that, although Theorem 1.3 in \cite{keti} requires $n\geq 3$, it is not necessary to put any restriction on the dimension. 

Our main goal is to find a smooth function $\xi: \Omega \subset \mathbb{R}^n \rightarrow \mathbb{R}$ that will be an \emph{ansatz} such that equations \eqref{pde-soliton-ij} and \eqref{pde-soliton-ii} are reduced to ordinary differential equations. Specifically, we want to find $\xi : \Omega \subset \mathbb{R}^n \rightarrow \mathbb{R}$ such that $f \circ \xi$ and $\varphi \circ \xi$ are solutions for such equations.

The next theorem prove that the most general form for the function $\xi$. 

\begin{theorem}\label{thm-most-general}
Let $(\mathbb{R}^{n}, g)$ be the standard pseudo-Euclidean space with coordinates $(x_{1}, \cdots, x_{n})$ and metric components $g_{ij} = \delta_{ij}\varepsilon_{i}$, $1\leq i, j\leq n$, where 
$\varepsilon_{i} = \pm1$, with at least one $\varepsilon_{i} = 1$. Then, there exists  a smooth real function $\xi : \Omega \subset \mathbb{R}^n \rightarrow \mathbb{R}$ such the PDE system given by the equations \eqref{pde-soliton-ij} and \eqref{pde-soliton-ii} are reduced to an ODE system with independent variable $\xi$ if, and only if, 
\begin{equation}
\xi(x_1, \ldots, x_n) = \Psi \left(\displaystyle\sum_{k} (\tau\varepsilon_{k}x_{k}^{2}+ \alpha_{k}x_{k}+\beta_{k}) \right), \label{eq-most-general}
\end{equation}
where $\tau, \, \alpha_{k}, \, \beta_{k} \in \mathbb{R}$ and $\Psi$ is a smooth real function.
\end{theorem}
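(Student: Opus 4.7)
The plan is to substitute the ansatz $\varphi = \Phi(\xi)$, $f = F(\xi)$ into \eqref{pde-soliton-ij}--\eqref{pde-soliton-ii} and then characterize when the resulting relations reduce to ordinary differential equations in the single variable $\xi$. Since $\varphi_{,i} = \Phi'(\xi)\xi_{,i}$ and $\varphi_{,ij} = \Phi''(\xi)\xi_{,i}\xi_{,j} + \Phi'(\xi)\xi_{,ij}$ (and analogously for $f$), each PDE becomes a linear combination, with coefficients depending only on $\xi$, of the building blocks $\xi_{,i}\xi_{,j}$, $\xi_{,ij}$, $\xi_{,i}^2$, $\xi_{,ii}$, $\sum_k\varepsilon_k\xi_{,k}^2$ and $\sum_k\varepsilon_k\xi_{,kk}$. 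Reduction to an ODE then amounts to requiring each of these building blocks either to be a function of $\xi$ alone or to multiply a coefficient that vanishes identically in $\xi$.

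My first move is to exploit the off-diagonal equation \eqref{pde-soliton-ij}, which takes the form
\[
\tilde A(\xi)\,\xi_{,i}\xi_{,j} + \tilde B(\xi)\,\xi_{,ij} = 0, \qquad i\neq j,
\]
with $\tilde A = (n-2)\Phi'' + \Phi F'' + 2\Phi'F'$ and $\tilde B = (n-2)\Phi' + \Phi F'$. In the nondegenerate regime this forces $\xi_{,ij}/(\xi_{,i}\xi_{,j})$ to be a function of $\xi$ alone, say $\rho(\xi)$. Rewriting this as $(\log\xi_{,i})_{,j} = \rho(\xi)\,\xi_{,j}$ and integrating against an antiderivative $R$ of $\rho$ yields $\xi_{,i} = e^{R(\xi)}\mu_i(x_i)$ for one-variable functions $\mu_i$; a further integration produces $\xi = \Psi(H)$ with $H(x_1,\ldots,x_n) = \sum_k M_k(x_k)$.

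Because composing $\xi$ with an outer smooth $\Psi$ does not affect whether \eqref{pde-soliton-ij}--\eqref{pde-soliton-ii} reduces to an ODE system (it only changes the independent variable of the ODE), I may assume $\xi = \sum_k M_k(x_k)$, so that $\xi_{,ij} = 0$ for $i\neq j$. Then \eqref{pde-soliton-ij} forces $\tilde A\equiv 0$, and consequently the coefficient of $\xi_{,i}^2$ in \eqref{pde-soliton-ii} vanishes as well, leaving
\[
B(\xi)\,M_i''(x_i) + \varepsilon_i\bigl[\,C(\xi)\,S(x) + D(\xi)\,T(x)\,\bigr] = \varepsilon_i\lambda,
\]
where $S = \sum_k\varepsilon_k(M_k')^2$ and $T = \sum_k\varepsilon_k M_k''$. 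The reduction requires $\varepsilon_i M_i''(x_i)$ to be a function of $\xi$; but its left-hand side depends only on $x_i$, whereas $\xi$ depends on every $x_k$, so this function must be constant. I conclude that $M_i''(x_i) = 2\tau\varepsilon_i$ for a single constant $\tau$ common to all $i$, and two integrations give exactly $M_i(x_i) = \tau\varepsilon_i x_i^2 + \alpha_i x_i + \beta_i$. A direct check then shows that $S = 4\tau\xi + \mathrm{const}$ and $T = 2\tau n$, so the remaining reduction conditions are automatically satisfied.

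The main obstacle will be treating cleanly the degenerate subcases in which one or more of $\tilde A, \tilde B, B, C, D$ vanishes identically in $\xi$: in each such configuration one must verify either that the argument above still yields the same conclusion on $\xi$, or that $\Phi, F$ are forced into a trivial form for which the statement on $\xi$ holds by an independent argument. Once the forward implication is settled, the converse---that every $\xi$ of the form \eqref{eq-most-general} actually reduces \eqref{pde-soliton-ij}--\eqref{pde-soliton-ii} to an ODE system---follows by direct substitution using the identities for $S$ and $T$ recorded above.
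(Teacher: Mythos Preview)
Your route is essentially the paper's: both extract from \eqref{pde-soliton-ij} that $\xi_{,ij}/(\xi_{,i}\xi_{,j})$ is a function of $\xi$, integrate to the separable form $\xi=\Psi\bigl(\sum_k M_k(x_k)\bigr)$, then use the diagonal equations to force each $M_k$ to be quadratic with a common leading coefficient, and verify the converse by direct substitution. The one organizational difference is that the paper packages the comparison of the different $i$-instances of \eqref{pde-soliton-ii} through a separately proved traced identity (Lemma~\ref{lemma-contraction}, yielding equation \eqref{M1}), whereas you bypass that lemma and work with \eqref{pde-soliton-ii} directly after normalizing to $\xi_{,ij}=0$. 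One point in your write-up should be tightened: the principle that ``each building block must individually be a function of $\xi$'' is not automatic (a linear combination can depend only on $\xi$ without each summand doing so), and the honest way to conclude $\varepsilon_i M_i''=\mathrm{const}$ is to subtract the $i$- and $j$-instances of \eqref{pde-soliton-ii} to obtain $B(\xi)\bigl[\varepsilon_i M_i''(x_i)-\varepsilon_j M_j''(x_j)\bigr]=0$, and then dispose of $B\equiv 0$ (equivalently $(n-2)\Phi'+\Phi F'=0$) as the trivial-soliton case---exactly the degenerate branch the paper handles explicitly.
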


Firstly, let us observe that Theorem \ref{thm-most-general} is sharp in the following sense: there is no other function $\xi$, that depends on all variables $(x_1, \ldots, x_n)$, such that equations \eqref{pde-soliton-ij} and \eqref{pde-soliton-ii} reduce to an ODE system. Let us also observe that the level functions of such a $\xi$ provides a foliation of the correspondent gradient Ricci soliton by hypersurfaces invariant under the action of the pseudo-orthogonal group in when $\tau \neq 0$ (up to change of coordinates) or the group of translations, when $\tau = 0$, which is precisely the case considered in \cite{keti}. Fern\'andez-L\'opez  and Garc\'ia-R\'io \cite{garciario}, using the local decomposition of a Ricci soliton metric into a warped product metric (see also \cite{catino}) proved that a locally conformally flat gradient Ricci soliton is rotationally symmetric. However, for a expanding soliton they require that the curvature operator must be nonnegative (cf. Remark 1). From Theorem \ref{thm-most-general}, we prove without any assumption on the curvature that a pseudo-Riemannian conformally flat gradient expanding (shrinking or steady) Ricci soliton, foliated by $n-1$ dimensional subsets invariant under isometries, is pseudo-rotationally symmetric (up to change of variables). Moreover, our proof is quite different in the sense that we do not use the warped product structure.

Once we have the most general \emph{ansatz}, one has the reduced system of ODE's. Since $\xi$ given in \eqref{eq-most-general} is a function on the polynomial $\displaystyle \sum_k (\tau\varepsilon_{k}x_{k}^{2}+ \alpha_{k}x_{k}+\beta_{k})$,  we will consider $\varphi$ and $f$ smooth real functions depending on the variable $\xi = \sum_k (\tau\varepsilon_{k}x_{k}^{2}+ \alpha_{k}x_{k}+\beta_{k}) $. In this case, let us write
$$
\varphi':= \dfrac{d \varphi}{d \xi}\,\, \textnormal{ and } \,\, f':=\dfrac{df}{d\xi}.
$$
The reduced system is given in the following theorem:

\begin{theorem}\label{thm-reduced}
Let $(\mathbb{R}^{n}, g)$ be the standard pseudo-Euclidean space with coordinates $(x_{1}, \cdots, x_{n})$ and metric components $g_{ij} = \delta_{ij}\varepsilon_{i}$, $1\leq i, j\leq n$, where $\varepsilon_{i} = \pm1$, with at least one $\varepsilon_{i} = 1$. Let $\Omega \subset \mathbb{R}^{n}$ be open subset and consider smooth functions $\varphi(\xi)$ and $f(\xi)$ such that $\xi : \Omega \subset \mathbb{R}^n \rightarrow \mathbb{R}$ is given by $\xi = \sum_k (\tau\varepsilon_{k}x_{k}^{2}+ \alpha_{k}x_{k}+\beta_{k})$. Then $(\Omega, \bar{g})$ is gradient Ricci soliton with potential function $f(\xi)$ and $\bar{g} = \frac{g}{(\varphi(\xi))^{2}}$ if and only if the functions $\varphi$ and $f$ satisfy 
\begin{eqnarray}\label{eq-reduced-1}
(n-2)\varphi''+f''\varphi+2\varphi'f'=0
\end{eqnarray}
and
\begin{eqnarray}\label{eq-reduced-2}
2\tau\varphi[2(n-1)\varphi'+\varphi f']
+\left[\varphi\varphi''-(n-1)(\varphi')^{2}-\varphi\varphi'f'\right](4\tau\xi+ \Lambda)=\lambda,
\end{eqnarray}
 where $\tau, \, \alpha_{k}, \, \beta_{k} \in \mathbb{R}$ and $\Lambda=\displaystyle\sum_{k} (\varepsilon_k \alpha^{2}_{k} -4\tau \beta_{k})$.
\end{theorem}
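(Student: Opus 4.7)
The plan is a direct substitution via chain rule into the PDE system (\ref{pde-soliton-ij})–(\ref{pde-soliton-ii}), exploiting the very simple structure of $\xi$. First I would record the first and second partial derivatives of $\xi = \sum_k(\tau\varepsilon_k x_k^2+\alpha_k x_k + \beta_k)$: namely $\xi_{,i}=2\tau\varepsilon_i x_i+\alpha_i$, $\xi_{,ii}=2\tau\varepsilon_i$, and $\xi_{,ij}=0$ for $i\neq j$. Then by the chain rule, for any $h(\xi)$ (applied to $\varphi$ and $f$), one has $h_{,i}=h'\xi_{,i}$, $h_{,ij}=h''\xi_{,i}\xi_{,j}$ for $i\neq j$, and $h_{,ii}=h''\xi_{,i}^{\,2}+2\tau\varepsilon_i h'$.

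Next I would plug these into the off-diagonal equation (\ref{pde-soliton-ij}). Every term acquires a common factor $\xi_{,i}\xi_{,j}$, yielding
\begin{equation*}
\bigl[(n-2)\varphi''+\varphi f''+2\varphi'f'\bigr]\xi_{,i}\xi_{,j}=0,\qquad i\neq j.
\end{equation*}
Since $\xi$ depends on every variable in a generic way, there exist points at which $\xi_{,i}\xi_{,j}\neq 0$ (and anyway we want the reduction to hold identically in $x$), so (\ref{eq-reduced-1}) follows. Conversely, (\ref{eq-reduced-1}) clearly implies (\ref{pde-soliton-ij}).

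For the diagonal equation (\ref{pde-soliton-ii}) I would split the computation into two pieces. The bracket $(n-2)\varphi_{,ii}+\varphi f_{,ii}+2\varphi_{,i}f_{,i}$ becomes
\begin{equation*}
\bigl[(n-2)\varphi''+\varphi f''+2\varphi'f'\bigr]\xi_{,i}^{\,2}+2\tau\varepsilon_i\bigl[(n-2)\varphi'+\varphi f'\bigr],
\end{equation*}
whose first summand vanishes by (\ref{eq-reduced-1}). For the trace-like sum $\sum_k\varepsilon_k[\varphi\varphi_{,kk}-(n-1)\varphi_{,k}^{\,2}-\varphi\varphi_{,k}f_{,k}]$, the $\xi_{,ii}$-terms contribute $2n\tau\varphi\varphi'$ (using $\varepsilon_k^{\,2}=1$), and the remaining terms factor as $[\varphi\varphi''-(n-1)(\varphi')^2-\varphi\varphi'f']\sum_k\varepsilon_k\xi_{,k}^{\,2}$.

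The one computation that deserves attention, and which supplies the term $4\tau\xi+\Lambda$ in the statement, is the identity
\begin{equation*}
\sum_k\varepsilon_k\xi_{,k}^{\,2}=\sum_k\bigl(4\tau^{2}\varepsilon_k x_k^{2}+4\tau\alpha_k x_k+\varepsilon_k\alpha_k^{2}\bigr)=4\tau\xi+\Lambda,
\end{equation*}
obtained by adding and subtracting $4\tau\beta_k$ to match $4\tau\xi$. Combining the two pieces, multiplying the bracket contribution by $\varphi$ and adding $\varepsilon_i$ times the trace sum, the factor $(n-2)\varphi'+\varphi f'+n\varphi'=2(n-1)\varphi'+\varphi f'$ appears, and (\ref{pde-soliton-ii}) becomes exactly $\varepsilon_i$ times the right-hand side of (\ref{eq-reduced-2}). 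Dividing by $\varepsilon_i$ (which is $\pm1$) gives the ODE independent of $i$, as desired. The converse direction is immediate by reversing the substitution. The only mildly subtle step is the bookkeeping that yields the $4\tau\xi+\Lambda$ factor; everything else is chain-rule routine.
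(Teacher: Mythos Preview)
Your proposal is correct and follows essentially the same route as the paper: the paper treats Theorem~\ref{thm-reduced} as a corollary of the converse part of Theorem~\ref{thm-most-general}, where it performs exactly this chain-rule substitution with $U_k(x_k)=\tau\varepsilon_k x_k^2+\alpha_k x_k+\beta_k$, obtains \eqref{eq-reduced-1} from \eqref{pde-soliton-ij}, computes $\sum_k\varepsilon_k(U_k')^2=4\tau\xi+\Lambda$ and $\sum_k\varepsilon_k U_k''=2n\tau$, and then simplifies \eqref{pde-soliton-ii} to \eqref{eq-reduced-2} after combining $2\tau(n-2)\varphi'+2n\tau\varphi'=4\tau(n-1)\varphi'$.
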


An example of complete solution for Theorem \ref{thm-reduced} is the Gaussian soliton (cf. \cite{cao2009,chow}), shrinking and expanding (cf. \cite{cao2009}). In fact, it is enough to consider $\varphi=k$, where $k$ is constant, and $\tau \neq 0$. Therefore, from \eqref{eq-reduced-1} we have that $f(\xi)=a_{1}\xi+a_{2},$ where $a_{1},a_{2}\in\mathbb{R}.$ From \eqref{eq-reduced-2} we can see that $a_{1}=\frac{\lambda}{2\tau k^{2}}$. Considering $\alpha_{i}=\beta_{i}=0$ for all $i\in\{1,\ldots, n\}$ in the expression of $\xi$, we have the Gaussian soliton.

On the other hand, when $f$ is constant, it follows from \eqref{eq-reduced-1} that $\varphi(\xi) = b_1 \xi + b_2$, which implies $\overline{g}$ has constant curvature. Consequently, the space forms are complete solutions of Theorem \ref{thm-reduced}.

When $\tau = 0$, we recover the ODE's given in \cite{keti}. In this case, if we consider $\lambda=0$ we have the explicit solutions given in \cite{keti}.

Now, we consider a special case of Theorem \ref{thm-reduced} when $\tau \neq 0$. By considering a constraint that relates the second derivatives of $\varphi$ and $f$, we will get a system of ordinary differential equations of first order:

\begin{theorem}\label{thm-special}
Let $(\mathbb{R}^{n}, g)$ be the standard pseudo-Euclidean space with coordinates $(x_{1}, \cdots, x_{n})$ and metric components $g_{ij} = \delta_{ij}\varepsilon_{i}$, $1\leq i, j\leq n$, where $\varepsilon_{i} = \pm1$, with at least one $\varepsilon_{i} = 1$. Let $\Omega \subset \mathbb{R}^{n}$ be open subset and consider smooth functions $\varphi(\xi)$ and $f(\xi)$ such that $\xi : \Omega \subset \mathbb{R}^n \rightarrow \mathbb{R}$ is given by $\xi = \sum_k (\tau\varepsilon_{k}x_{k}^{2}+ \alpha_{k}x_{k}+\beta_{k})$. Then $(\Omega, \bar{g})$ is gradient Ricci soliton with potential function $f(\xi)$, $\bar{g} = \frac{g}{(\varphi(\xi))^{2}}$ such that $2n\varphi''+ \varphi f''=0$ if, and only if the functions $\varphi$ and $f$ satisfy 

\begin{equation}
\varphi(\xi)^2 = h(\xi), \label{special-phi}
\end{equation}
\begin{equation}\label{special-f}
f'(\xi)=c_1 h(\xi)^{-2n/(n+2)},
\end{equation}
for
\begin{equation}
(n-1)h' + c_1 h^{-(n-2)/(n+2)} = c_2 (4 \tau \xi + \Lambda) + \dfrac{\lambda}{2 \tau} \label{thm-special-F}
\end{equation}
where $\tau, \, \alpha_{k}, \, \beta_{k}, c_1, \, c_2 \in \mathbb{R}$ and $\Lambda=\displaystyle\sum_{k} (\varepsilon_k \alpha^{2}_{k} -4\tau \beta_{k})$.
\end{theorem}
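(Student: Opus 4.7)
The plan is to exploit the auxiliary constraint $2n\varphi''+\varphi f''=0$ to collapse the system of Theorem~\ref{thm-reduced} into something integrable in closed form. First I would substitute $\varphi f''=-2n\varphi''$ into the algebraic relation \eqref{eq-reduced-1}; this instantly yields the single first-order identity
\begin{equation*}
2\varphi' f' = (n+2)\varphi''.
\end{equation*}

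Next I would extract \eqref{special-phi}--\eqref{special-f}. A direct computation that uses both the constraint and the identity above gives
\begin{equation*}
\bigl(\varphi^{4n/(n+2)}\,f'\bigr)' \;=\; \tfrac{2n}{n+2}\,\varphi^{(3n-2)/(n+2)}\,\bigl[\,2\varphi'f'-(n+2)\varphi''\,\bigr] \;=\; 0,
\end{equation*}
so that $\varphi^{4n/(n+2)} f'$ is a constant $c_1$. Setting $h=\varphi^{2}$ rewrites this as $f'=c_1 h^{-2n/(n+2)}$, which are precisely \eqref{special-phi} and \eqref{special-f}.

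The crux of the argument is to pass from \eqref{eq-reduced-2} to \eqref{thm-special-F}. I introduce the abbreviations
\begin{equation*}
P(\xi) := \varphi\bigl[2(n-1)\varphi'+\varphi f'\bigr], \qquad Q(\xi) := \varphi\varphi''-(n-1)(\varphi')^{2}-\varphi\varphi'f',
\end{equation*}
so that \eqref{eq-reduced-2} reads $2\tau P+(4\tau\xi+\Lambda)\,Q=\lambda$. The key algebraic observation is that, modulo the constraint and $2\varphi'f'=(n+2)\varphi''$, all $\varphi f'$- and $f''$-terms appearing in the computation of $P'$ collapse and one obtains $P' = -2Q$. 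Substituting this back into \eqref{eq-reduced-2} and changing variable to $u=4\tau\xi+\Lambda$ converts it into the first-order linear Euler equation $u\,dP/du - P = -\lambda/(2\tau)$, whose general solution is $P = c_2\,u+\lambda/(2\tau)$. Rewriting $P = (n-1)h' + h f' = (n-1)h'+c_1 h^{-(n-2)/(n+2)}$ then recovers exactly \eqref{thm-special-F}.

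For the converse, differentiating \eqref{thm-special-F} and eliminating $f',f''$ via \eqref{special-f} reproduces \eqref{eq-reduced-2}, while \eqref{eq-reduced-1} together with the constraint falls out of differentiating \eqref{special-f} directly. The hard part will be spotting the coupling $P'=-2Q$: once it is in hand, the apparently nonlinear equation \eqref{eq-reduced-2} reveals itself as a transparent linear Euler ODE, but without this identity direct integration of the reduced system looks hopeless.
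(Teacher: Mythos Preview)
Your proposal is correct and follows essentially the same route as the paper. Your $P,Q$ are the paper's $A,\tfrac{1}{2}B$, and your key observation $P'=-2Q$ is exactly the paper's $A'=-B$; both then integrate the resulting linear equation in $T=4\tau\xi+\Lambda$ to obtain $P=c_2T+\lambda/(2\tau)$, while your derivation of $f'=c_1\varphi^{-4n/(n+2)}$ via $(\varphi^{4n/(n+2)}f')'=0$ is just the integrated form of the paper's separation $\tfrac{(n+2)}{2n}\,f''/f'=-2\varphi'/\varphi$.
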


When we consider $n=2$ in Theorem \ref{thm-special}, we have
\begin{equation*}
h(\xi) = 2 c_2 \tau \xi^2 + \left( c_2 \Lambda + \dfrac{\lambda}{2 \tau} - c_1 \right) \xi + c_3.
\end{equation*}
In the Riemannian case, a particular choice of constants leads us to the cigar soliton. In fact, for $c_2=\lambda=0$ and $c_1=-c_3=-1$, whe have $h(\xi) = \xi+1$. By considering $\alpha_i = \beta_i = 0$ we have
\begin{eqnarray*}
	\bar{g}=\frac{dx_{1}^{2}+dx_{2}^{2}}{1+x_{1}^{2}+x_{2}^{2}}
\end{eqnarray*}
and
\begin{eqnarray*}
f(x_{1}, x_{2})=-\ln(1+x_{1}^{2}+x_{2}^{2}).
\end{eqnarray*}

\

\section{Proof of the Main Results}

This section is reserved to the demonstration of the main results of this manuscript. Let us start with the following lemma:

\begin{lemma}\label{lemma-contraction}
Let $(\mathbb{R}^{n}, g)$ be a pseudo-Euclidean space with cartesian coordinates $(x_{1},\ldots , x_{n})$ and metric components $g_{ij} = \delta_{ij}\varepsilon_{i}$, $1\leq i,j\leq n$, where $\varepsilon_{i}=\pm1$ which at least one equal to $1$. Let $\Omega \subset\mathbb{R}^{n}$ be an open subset and $f,\varphi : \Omega\rightarrow\mathbb{R}$ be smooth functions. If $(\Omega, \bar{g})$ is a gradient Ricci soliton equation with potential function $f$ and $\bar{g}=\dfrac{g}{\varphi^2}$, then the functions $\varphi$ and $f$ satisfy 
\begin{eqnarray}\label{eq33-1}
\displaystyle\sum_{k}\varepsilon_{k}\left[2(n-1)\varphi\varphi_{,kk}-n(n-1)\varphi_{,k}^{2}+\varphi^{2}f_{,kk}-(n-2)\varphi\varphi_{,k}f_{,k}\right]=n\lambda.
\end{eqnarray}
\end{lemma}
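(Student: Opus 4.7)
The plan is to obtain \eqref{eq33-1} as a direct contraction of the diagonal equations \eqref{pde-soliton-ii}, viewing it as the pseudo-Euclidean trace of the soliton system. Since the right-hand side of \eqref{eq33-1} is $n\lambda$ and $\sum_i \varepsilon_i^2 = n$, the natural thing to do is multiply \eqref{pde-soliton-ii} by $\varepsilon_i$ and sum over $i$ from $1$ to $n$.

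More concretely, first I would take \eqref{pde-soliton-ii}, multiply both sides by $\varepsilon_i$, and sum in $i$. The first bracketed term yields
\begin{equation*}
\sum_i \varepsilon_i \varphi\bigl[(n-2)\varphi_{,ii} + \varphi f_{,ii} + 2\varphi_{,i}f_{,i}\bigr],
\end{equation*}
which after relabeling $i \to k$ contributes
\begin{equation*}
\sum_k \varepsilon_k\bigl[(n-2)\varphi\varphi_{,kk} + \varphi^2 f_{,kk} + 2\varphi\varphi_{,k}f_{,k}\bigr].
\end{equation*}
The second term, which already carries a factor $\varepsilon_i$, becomes $\sum_i \varepsilon_i^2 = n$ times the inner sum, contributing
\begin{equation*}
n\sum_k \varepsilon_k\bigl[\varphi\varphi_{,kk} - (n-1)\varphi_{,k}^2 - \varphi\varphi_{,k}f_{,k}\bigr].
\end{equation*}
The right-hand side becomes $\sum_i \varepsilon_i^2 \lambda = n\lambda$.

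Finally, collecting the coefficients of each type of term inside $\sum_k \varepsilon_k[\,\cdots\,]$: the $\varphi\varphi_{,kk}$ coefficient adds up as $(n-2) + n = 2(n-1)$; the $\varphi^2 f_{,kk}$ coefficient is $1$; the $\varphi\varphi_{,k}f_{,k}$ coefficient is $2 - n = -(n-2)$; and the $\varphi_{,k}^2$ coefficient is $-n(n-1)$. This reproduces exactly
\begin{equation*}
\sum_k \varepsilon_k\bigl[2(n-1)\varphi\varphi_{,kk} - n(n-1)\varphi_{,k}^2 + \varphi^2 f_{,kk} - (n-2)\varphi\varphi_{,k}f_{,k}\bigr] = n\lambda,
\end{equation*}
which is \eqref{eq33-1}. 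No real obstacle is expected; the whole content is a bookkeeping exercise, and the only thing to be careful about is keeping the two occurrences of $\varepsilon_i$ in \eqref{pde-soliton-ii} straight so that after multiplying by $\varepsilon_i$ one gets $\varepsilon_i^2 = 1$ in the second sum while the first sum retains a single $\varepsilon_i$.
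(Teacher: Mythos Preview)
Your argument is correct. Both proofs amount to tracing the soliton equation, but they execute this differently. The paper goes back to the tensorial equation $Ric_{\bar g}+Hess_{\bar g}f=\lambda\bar g$, contracts it to $R_{\bar g}+\Delta_{\bar g}f=n\lambda$, and then computes $R_{\bar g}$ and $\Delta_{\bar g}f$ separately via the conformal change formula for the Ricci tensor and the Christoffel symbols of $\bar g$. You instead trace the coordinate equations \eqref{pde-soliton-ii} directly by multiplying by $\varepsilon_i$ and summing. Your route is shorter because it reuses the work already packaged into \eqref{pde-soliton-ii} (which the paper imports from \cite{keti}) rather than recomputing the curvature and Laplacian from scratch; the paper's route is more self-contained and makes the geometric content $R_{\bar g}+\Delta_{\bar g}f=n\lambda$ explicit.
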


\begin{proof}
When we contract Equation \eqref{solitonequation-bar}, it is a straightforward computation that
\begin{eqnarray}\label{solitoneq2}
R_{\bar{g}}+\Delta_{\bar{g}}f=n\lambda.
\end{eqnarray}

It is well known that if $\bar{g}=\frac{1}{\varphi^{2}}g$ (cf. Lemma 1 in \cite{khunel}), then
$$Ric_{\bar{g}}=\frac{1}{\varphi^{2}}\{(n-2)\varphi Hess_{g}(\varphi) + [\varphi\Delta_{g}\varphi - (n-1)|\nabla_{g}\varphi|^{2}]g\}.$$
Hence, the scalar curvature of $\bar{g}$ is given by
\begin{eqnarray}\label{scalar}
R_{\bar{g}}&=&\displaystyle\sum_{k=1}^{n}\varepsilon_{k}\varphi^{2}\left(Ric_{\bar{g}}\right)_{kk}=(n-1)(2\varphi\Delta_{g}\varphi - n|\nabla_{g}\varphi|^{2})\nonumber\\
&=&(n-1)\left[2\varphi\displaystyle\sum_{k}\varepsilon_{k}\varphi_{,kk}-n\displaystyle\sum_{k}\varepsilon_{k}(\varphi_{,k})^{2}\right]\nonumber\\
&=&\displaystyle\sum_{k}\varepsilon_{k}\left[2(n-1)\varphi\varphi_{,kk}-n(n-1)(\varphi_{,k})^{2}\right].
\end{eqnarray}

Remember that
$$Hess_{\bar{g}}(f)_{ij}=f_{,ij}-\displaystyle\sum_{k}\bar{\Gamma}^{k}_{ij}f_{,k},$$
where $\bar{\Gamma}^{k}_{ij}$ are the Christoffel symbols of the metric $\bar{g}$. For $i, j, k$ distinct, we have (cf. \cite{carmo} pg. 161)
\begin{eqnarray}\label{christoffel}
\bar{\Gamma}^{k}_{ij}=0,\quad\bar{\Gamma}^{i}_{ij}=-\frac{\varphi_{,j}}{\varphi},
\quad\bar{\Gamma}^{k}_{ii}=\varepsilon_{i}\varepsilon_{k}\frac{\varphi_{,k}}{\varphi},\quad\bar{\Gamma}^{i}_{ii}=-\frac{\varphi_{,i}}{\varphi}.
\end{eqnarray}
Therefore,
since
\begin{eqnarray*}
Hess_{\bar{g}}(f)_{ii}=f_{,ii}+2\frac{\varphi_{,i}f_{,i}}{\varphi}-\varepsilon_{i}\displaystyle\sum_{k}\varepsilon_{k}\frac{\varphi_{,k}f_{,k}}{\varphi}
\end{eqnarray*}
we obtain that 
\begin{eqnarray}\label{laplaciano}
\Delta_{\bar{g}}f=\varphi^{2}\displaystyle\sum_{i}\varepsilon_{i}Hess_{\bar{g}}(f)_{ii}=\displaystyle\sum_{k}\varepsilon_{k}\left[\varphi^{2}f_{,kk}-(n-2)\varphi\varphi_{,k}f_{,k}\right].
\end{eqnarray}
Then, from \eqref{solitoneq2}, (\ref{scalar}) and (\ref{laplaciano}) we have the result.

\end{proof}

\noindent {\bf Proof of Theorem \ref{thm-most-general}:}
By hypothesis $f(\xi)$ and $\varphi(\xi)$ are functions of $\xi$ then we have
\begin{eqnarray}\label{t1}
&&\varphi_{,i}=\varphi'\xi_{,i},\quad\varphi_{,ij}=\varphi''\xi_{,i}\xi_{,j}+\varphi'\xi_{,ij},\nonumber\\
&&{f}_{,i}={f}'\xi_{,i}\quad\mbox{and}\quad{f}_{,ij}={f}''\xi_{,i}\xi_{,j}+{f}'\xi_{,ij}.
\end{eqnarray}
Then, from (\ref{pde-soliton-ij}) 
\begin{eqnarray*}
(n-2)[\varphi''\xi_{,i}\xi_{,j}+\varphi'\xi_{,ij}]+\varphi[{f}''\xi_{,i}\xi_{,j}+{f}'\xi_{,ij}]+2\varphi'f'\xi_{,i}\xi_{,j}=0.
\end{eqnarray*}
Which implies that,
\begin{eqnarray}\label{s1}
[(n-2)\varphi''+\varphi{f}''+2\varphi'f']\xi_{,i}\xi_{,j}+[(n-2)\varphi'+\varphi{f}']\xi_{,ij}=0.
\end{eqnarray}
Dividing the above equation for $\xi_{,i}\xi_{,j}$ we can infer that
\begin{eqnarray}\label{eq-for-F}
F(\xi)=\frac{\xi_{,ij}}{\xi_{,i}\xi_{,j}},
\end{eqnarray}
for some smooth function $F(\xi)$.

Combining equations \eqref{pde-soliton-ii} and \eqref{eq33-1} we have the following
\begin{eqnarray}\label{M1}
&&n\varepsilon_{i}[(n-2)\varphi_{,ii}+\varphi{f}_{,ii}+2\varphi_{i}f_{i}]\nonumber\\
&&=\displaystyle\sum_{k}\varepsilon_{k}\left[(n-2)\varphi_{,kk}+\varphi{f}_{,kk}+2\varphi_{k}f_{k}\right].
\end{eqnarray}
Since $\varphi_{,ii}=\varphi''\xi_{,i}^{2}+\varphi'\xi_{,ii}$, the same for $f$, we obtain that
\begin{eqnarray}\label{s2}
&&[(n-2)\varphi''+\varphi{f}''+2\varphi'f']\left[n(\varepsilon_{i}\xi_{,i}^{2})-\displaystyle\sum_{k}(\varepsilon_{k}\xi_{,k}^{2})\right]\nonumber\\
&&+[(n-2)\varphi'+\varphi{f}']\left[n(\varepsilon_{i}\xi_{,ii})-\displaystyle\sum_{k}(\varepsilon_{k}\xi_{,kk})\right]=0.
\end{eqnarray}
From \eqref{s1}, \eqref{eq-for-F} and \eqref{s2} we have
\begin{eqnarray}\label{imp1}
&&[(n-2)\varphi'+\varphi{f}']\Bigg\{\left[n(\varepsilon_{i}\xi_{,ii})-\displaystyle\sum_{k}(\varepsilon_{k}\xi_{,kk})\right]\nonumber\\
&&-F(\xi)\left[n(\varepsilon_{i}\xi_{,i}^{2})-\displaystyle\sum_{k}(\varepsilon_{k}\xi_{,k}^{2})\right]\Bigg\}=0.
\end{eqnarray}

Observe that the soliton is trivial if
\begin{eqnarray}\label{ts1}
(n-2)\varphi'+\varphi{f}'=0.
\end{eqnarray}
In fact, supposing that, the first derivative of this equation leads us to $$(n-2)\varphi''+\varphi{f}''+\varphi'f'=0.$$
Therefore, from \eqref{s1} we have $\varphi'f'\xi_{,i}\xi_{,j}=0.$ This equation implies that either $\varphi$ or $f$ are constant functions. In any case, from \eqref{ts1}, we have that $\varphi$ and $f$ must by trivial.

Consequently,  from \eqref{imp1} we conclude that
\begin{eqnarray*}
\left[n(\varepsilon_{i}\xi_{,ii})-\displaystyle\sum_{k}(\varepsilon_{k}\xi_{,kk})\right]=F(\xi)\left[n(\varepsilon_{i}\xi_{,i}^{2})-\displaystyle\sum_{k}(\varepsilon_{k}\xi_{,k}^{2})\right].
\end{eqnarray*}
We rewrite this equation in the  form
\begin{eqnarray*}
n\varepsilon_{i}\left[\xi_{,ii}-F(\xi)\xi_{,i}^{2}\right]=\displaystyle\sum_{k}\varepsilon_{k}\left[\xi_{,kk}-F(\xi)\xi_{,k}^{2}\right],
\end{eqnarray*}
in order to get
\begin{eqnarray}
n\varepsilon_{i}\left[(\xi_{,i})e^{-\int{F}d\xi}\right]_{,i}=\displaystyle\sum_{k}\varepsilon_{k}\left[(\xi_{,k})e^{-\int{F}d\xi}\right]_{,k}. \label{consequence-combining}
\end{eqnarray}
At this point, let us observe that equation (\ref{eq-for-F}) can be integrated as the following
$$
\ln(\xi_{,i}) = \int F d \xi + F_i(\hat{x}_j),
$$
where the symbol $\hat{x}_j$ denotes that $F_{i}$ does not depend on the variable $x_j$. By considering this process for all $j \neq i$, we have
\begin{equation}
\xi_{,i} = e^{\int F d \xi + F_i(x_i)}=G_i G, \label{xi-derivative}
\end{equation}
where $G_i(x_i) = e^{F_i(x_i)}$ and $G(\xi) = e^{\int F d \xi} $. Therefore, equations \eqref{consequence-combining} and \eqref{xi-derivative} imply that


\begin{eqnarray*}
n\varepsilon_{i}G'_{i}=\displaystyle\sum_{k}\varepsilon_{k}G'_{k}.
\end{eqnarray*}
Observe that the left side of the above equation depends only depend on the variable $x_{i}$, thus this implies that
\begin{eqnarray}\label{sera}
\varepsilon_{i}G'_{i}=\varepsilon_{j}G'_{j},\quad\forall i\neq j.
\end{eqnarray}
Since $G_{i}$ is a function which depends only of $x_i$, the only possibility for \eqref{sera} is $G_i(x_{i}) =2\tau\varepsilon_{i}x_{i}+ \alpha_i$, where $\alpha_i$ and $\tau$ are real constants. Since $\frac{\xi_{,i}}{G_i}= G$ we have for all $i \neq j$ that 
\begin{equation} \label{derivatives-xi}
\dfrac{\xi_{,i}}{2\tau\varepsilon_{i}x_{i}+ \alpha_{i}} = \dfrac{\xi_{,j}}{2\tau\varepsilon_{j}x_{j}+ \alpha_{j}}.
\end{equation}
The characteristic for the equation above implies that
\begin{eqnarray}\label{basic_invariant}
\xi = \Psi\left(\displaystyle\sum_{i}\tau\varepsilon_{i}x_{i}^{2}+ \alpha_{i}x_{i}+\beta_{i}\right),
\end{eqnarray}
where $\Psi$ is a smooth function. 


Conversely, let us suppose that $\xi = \Psi\left(\displaystyle\sum_{i}\tau\varepsilon_{i}x_{i}^{2}+ \alpha_{i}x_{i}+\beta_{i}\right)$, for a real smooth function $\Psi$ and constants $\tau, \, \alpha_i, \, \beta_i$. Since we are considering smooth functions by $f(\xi)$ and $\varphi(\xi)$ as solutions of (\ref{pde-soliton-ij}) and (\ref{pde-soliton-ii}), we can suppose, without loss of generality, that $\xi = \displaystyle\sum_{k} U_k (x_k)$, where $U_k(x_k) = \tau \varepsilon_k x_k^2+\alpha_k x_k + \beta_k$, then we have
\begin{eqnarray*}
&&\varphi_{,i}=\varphi'U'_{i},\quad\varphi_{,ij}=\varphi''U'_{i}U'_{j},\quad\varphi_{,ii}=\varphi''(U'_{i})^{2}+\varphi'U''_{i},\nonumber\\
&&{f}_{,i}={f}'U'_{i},\quad{f}_{,ij}={f}''U'_{i}U'_{j}\quad\mbox{and}\quad f_{,ii}=f''(U'_{i})^{2}+f'U''_{i}.
\end{eqnarray*}
Hence, from equations \eqref{pde-soliton-ij} and \eqref{pde-soliton-ii} we get 
\begin{eqnarray}\label{edo1}
[(n-2)\varphi''+f''\varphi+2\varphi'f']U^{\prime}_{i}U^{\prime}_{j}=0
\end{eqnarray}
and for $i=j$
\begin{eqnarray}\label{edo2}
&&\varphi[(n-2)\varphi''+f''\varphi+2\varphi'f'](U^{\prime}_{i})^{2}+\varphi[(n-2)\varphi'+\varphi f']U_{i}''\nonumber\\
&&+\varepsilon_{i}\left\{\left[\varphi\varphi''-(n-1)(\varphi')^{2}-\varphi\varphi'f'\right]\displaystyle\sum_{k}\varepsilon_{k}(U'_{k})^{2}+\varphi\varphi'\displaystyle\sum_{k}\varepsilon_{k}U''_{k}\right\}=\varepsilon_{i}\lambda
\end{eqnarray}
From \eqref{edo1} we get
\begin{eqnarray}\label{edo-p1}
(n-2)\varphi''+f''\varphi+2\varphi'f'=0.
\end{eqnarray}
For the second equation we first have
\begin{equation*}
\begin{array}{rcl}
\displaystyle\sum_{k}\varepsilon_{k}(U^{\prime}_{k})^{2}&=&4\tau\xi+ \Lambda, \\ 
\displaystyle\sum_{k}\varepsilon_{k}U_{k}^{\prime\prime}&=&2n\tau,
\end{array}
\end{equation*}
with $\Lambda=\displaystyle\sum_{k} (\varepsilon_k \alpha_k -4\tau \beta_{k})$. From the \eqref{edo-p1}, equation \eqref{edo2} is rewritten as 
\begin{eqnarray*}
&&2\tau\varepsilon_{i}\varphi[(n-2)\varphi'+\varphi f']\nonumber\\
&&+\varepsilon_{i}\left\{\left[\varphi\varphi''-(n-1)(\varphi')^{2}-\varphi\varphi'f'\right](4\tau\xi+ \Lambda)+2n\tau\varphi\varphi'\right\}=\varepsilon_{i}\lambda.
\end{eqnarray*}
Then we have
\begin{eqnarray}\label{edo-p2}
2\tau\varphi[2(n-1)\varphi'+\varphi f']
+\left[\varphi\varphi''-(n-1)(\varphi')^{2}-\varphi\varphi'f'\right](4\tau\xi+ \Lambda)=\lambda.
\end{eqnarray}

\hfill $\Box$
\

\noindent {\bf Proof of Theorem \ref{thm-reduced}:} Theorem \ref{thm-reduced} is actually a corollary from Theorem \ref{thm-most-general}. Since $\xi=\displaystyle\sum_{i}(\tau\varepsilon_{i}x_{i}^{2}+ \alpha_{i}x_{i}+\beta_{i})$ reduces the system, the ODE system is given by equations (\ref{edo-p1}) and (\ref{edo-p2}).

\hfill $\Box$
\

\noindent {\bf Proof of Theorem \ref{thm-special}:} Firstly, the constraint $ 2n \varphi'' + \varphi f''=0$ applied to equation (\ref{eq-reduced-1}) implies that 
\begin{eqnarray*}
	\frac{(n+2)}{2n}\frac{f''}{f'}=-2\frac{\varphi'}{\varphi}.
\end{eqnarray*}

An integration from the above equation give us 
\begin{eqnarray}\label{exjp3}
	f'=c_{1}\varphi^{-4n/(n+2)};\quad\, c_1 \in \mathbb{R}, \label{dem-special-f}
\end{eqnarray}

Now let us consider the following substitution on equation \eqref{eq-reduced-2}:
	\begin{eqnarray*}
		A&=&\varphi[2(n-1)\varphi'+\varphi\,f'],\nonumber\\
		B&=&2[\varphi\varphi''-(n-1)(\varphi')^{2}-\varphi\varphi'f'], \\
		T&=&4\tau\xi+\Lambda
	\end{eqnarray*}
Then,  \eqref{eq-reduced-2} is rewritten as
    \begin{eqnarray}
    	T'A+TB=2\lambda. \label{reduced-eq-reduced-2}
	\end{eqnarray}
Since $A'=\varphi[2n\varphi''+\varphi\,f'']-B$, the constraint $2n\varphi''+\varphi\,f''=0$ implies that $A'=-B$. Therefore, it follows from \eqref{reduced-eq-reduced-2} that 
    \begin{eqnarray}\label{exjp1}
    	A=c_{2}T+\frac{\lambda}{2\tau};\quad\,c_{1}\in\mathbb{R}.
    \end{eqnarray}	

From \eqref{dem-special-f} and \eqref{exjp1} we get
\begin{eqnarray}\label{exjp2}
	2(n-1)\varphi\varphi'+c_{2}\varphi^{-2(n-2)/(n+2)}=c_{1}T+\frac{\lambda}{2\tau} \label{dem-F}.
\end{eqnarray}
Consequently, if $h(\xi) = \varphi(\xi)^2$, from equations (\ref{dem-special-f}) and (\ref{dem-F}) the proof is done.

\hfill $\Box$

\

\begin{acknowledgement}
The authors are grateful to P. Bonfim, R. Pina and K. Tenenblat for the valuable suggestions on the elaboration of this manuscript. 
\end{acknowledgement}

\end{document}